\newcommand{\A}{\mathbb{A}}
\newcommand{\oper}{\otimes}
\renewcommand{\P}{\mathbb{P}}
\newcommand{\F}{\mathscr{F}}
\newcommand{\G}{\mathscr{G}}
\newtheorem{thm}{Theorem}[section]
\newtheorem*{theoA}{Theorem A}
\newtheorem*{corB}{Corollary B}
\newtheorem*{theoC}{Theorem C}
\newtheorem*{qst}{Question}
\newtheorem{lm}[thm]{Lemma}
\newtheorem{cor}[thm]{Corollary}
\theoremstyle{definition}
\theoremstyle{definition}\newtheorem{defi}[thm]{Definition}
\theoremstyle{definition}\newtheorem{rmk}[thm]{Remark}
\renewcommand{\epsilon}{\varepsilon}
\newcommand{\nocontentsline}[3]{}
\newcommand{\tocless}[2]{\bgroup\let\addcontentsline=\nocontentsline#1{#2}\egroup}
\title{On rationalizing divisors}
\author{Lorenzo Prelli}
\date{}
\begin{document}
\maketitle
\begin{abstract}
Rational pairs generalize the notion of rational singularities to reduced pairs $(X,D)$. In this paper we deal with the problem of determining whether a normal variety $X$ has a rationalizing divisor, i.e., a reduced divisor $D$ such that $(X, D)$ is a rational pair.

We give a criterion for cones to have a rationalizing divisor, and relate the existence of such a divisor to the locus of rational singularities of a variety.
\end{abstract}

\tableofcontents

\section*{Introduction}
	The development of the minimal model program showed the importance of working with pairs or the form $(X,D)$, where $D$ is a Weil divisor on $X$.
	
	Since many important classes  of singularities of $\mathbb{Q}$-Gorenstiein varieties have a natural extension to pairs, one might ask whether there is a way to generalize the notion of rational singularities to pairs.
	Rational pairs have been recently introduced by Koll\'ar and Kov\'acs \cite[Section 2.5]{kollar2013sing}, and their deformation theory has been explored in Lindsay Erickson's PhD thesis \cite{linthsay}.
	
	In this paper, we focus on the following:
	\begin{qst}
	Given a normal variety $X$, when is there a reduced divisor $D$ such that $(X,D)$ is rational?
	\end{qst}
	We will call such a divisor a \emph{rationalizing divisor}.
	
	The first result is a condition for cones on rational pairs:
	\begin{theoA}[Theorem \ref{conecrit}]
	Let $(X, D)$ be a rational pair, and let $L$ be an ample line bundle. Then the cone $(C(X,L), C(D, L\left|_D\right.  ))$ is a rational pair if and only if
	\[
	H^{i}(X, L^{m}(-D)) = 0\text{ for }i>0, m\geq 0
	\]
	\end{theoA}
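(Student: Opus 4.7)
\medskip
\noindent\textbf{Proof proposal.}
The plan is to produce an explicit resolution of $(C(X,L), C(D, L|_D))$ from a thrifty resolution of $(X,D)$, and then reduce the higher direct image computation to twisted cohomology on $X$.

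First, set $C=C(X,L)$ with apex $v$. Blowing up $v$ gives $\tilde C = \sp_X \bigoplus_{m\geq 0} L^m$, the total space of $L^{-1}$ over $X$, with bundle projection $p:\tilde C\to X$ and contraction $\pi:\tilde C\to C$. Starting from a thrifty resolution $g:(Y,D_Y)\to (X,D)$, I would form the base change $\tilde Y := \tilde C\times_X Y$, which is the total space of $g^{*}L^{-1}$ over $Y$, with $\tilde p:\tilde Y\to Y$ and $\tilde g:\tilde Y\to \tilde C$. The composite $f := \pi\circ \tilde g:\tilde Y\to C$ is a resolution, and the natural divisor to consider is $D_{\tilde Y} := \tilde p^{*}D_Y$, which one checks is the strict transform of $C(D,L|_D)$ (it agrees with $\tilde g^{-1}(p^{-1}(D))$ outside the exceptional locus of $\tilde g$).

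Next, since $C$ is affine, the higher direct images $R^{i}f_{*}\mathcal{O}_{\tilde Y}(-D_{\tilde Y})$ are determined by the graded $R$-modules $H^{i}(\tilde Y, \mathcal{O}_{\tilde Y}(-D_{\tilde Y}))$, where $R=\bigoplus_m H^{0}(X,L^{m})$. As $\tilde p$ is affine with $\tilde p_{*}\mathcal{O}_{\tilde Y}=\bigoplus_{m\geq 0} g^{*}L^{m}$, the projection formula gives
\[
\tilde p_{*}\mathcal{O}_{\tilde Y}(-D_{\tilde Y}) \;=\; \bigoplus_{m\geq 0} g^{*}L^{m}\otimes \mathcal{O}_Y(-D_Y), \qquad R^{j}\tilde p_{*}=0 \text{ for } j>0.
\]
Pushing forward further along $g$ and invoking rationality of $(X,D)$, I get $R^{j}g_{*}(g^{*}L^{m}(-D_Y)) = L^{m}\otimes R^{j}g_{*}\mathcal{O}_Y(-D_Y)$, which is $L^{m}(-D)$ in degree $0$ and vanishes for $j>0$. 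The two Leray spectral sequences then collapse to
\[
R^{i}f_{*}\mathcal{O}_{\tilde Y}(-D_{\tilde Y}) \;\cong\; \bigoplus_{m\geq 0} H^{i}(X, L^{m}(-D))
\]
as sheaves at $v$ (away from $v$, everything follows from rationality of $(X,D)$).

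Finally, for $i=0$ one identifies the right-hand side with the ideal of $C(D,L|_D)$ in $C$: indeed, by definition $C(D,L|_D)=\sp \bigoplus_m H^{0}(D,L^{m}|_D)$, and the graded kernel of $R \to \bigoplus_m H^{0}(D,L^{m}|_D)$ is exactly $\bigoplus_m H^{0}(X,L^{m}(-D))$, so the $f_{*}$ condition in the definition of a rational pair is automatic. For $i>0$, the vanishing $R^{i}f_{*}\mathcal{O}_{\tilde Y}(-D_{\tilde Y})=0$ is then equivalent to $H^{i}(X,L^{m}(-D))=0$ for all $m\geq 0$, which gives both implications.

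The main obstacle I anticipate is Step 1: verifying that $(\tilde Y, D_{\tilde Y})\to (C, C(D,L|_D))$ is thrifty (or whichever class of resolution the definition of rational pair requires), so that the above direct-image computation actually detects rationality of the cone. Since $\tilde p$ is a smooth affine morphism and the construction is obtained by base change from a thrifty $g$, I expect the relevant strata and crepancy conditions to be preserved, but this is the point that needs to be written out carefully before the cohomological computation can be used in both directions.
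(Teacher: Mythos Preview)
Your proposal is correct and follows essentially the same route as the paper. The paper also blows up the vertex, observes that the blow-up is a line-bundle over $X$ and hence a rational pair by the $\A^1/\P^1$-bundle lemma, takes a thrifty resolution of it, and then computes the higher direct images on the cone via the affine projection to $X$, obtaining $R^{j}p_{*}\mathcal{O}_Y(-B)\cong\bigoplus_{m\ge 0}H^{j}(X,L^{m}(-D))$. The only organizational difference is that the paper splits the computation in two: it first uses a degeneration-of-spectral-sequence lemma to reduce rationality of the cone pair to the vanishing of $R^{j}p_{*}\mathcal{O}_Y(-B)$ for the blow-up $p$ itself, and then computes these sheaves; you instead collapse both Leray sequences at once, which is equally valid. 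The thriftiness check you single out as the main obstacle is exactly what the paper isolates as its Step~(1), and it verifies it by tracking strata through the product structure of the blow-up; so your expectation that ``base change along a smooth affine morphism preserves the strata conditions'' is on the right track and can be made precise along those lines.
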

	
	As a consequence of the above theorem, we get the following
	\begin{corB}[Corollary \ref{conenec}]
	Let $X$ be a normal variety of dimension $n$ and let $L$ be an ample line bundle on $X$. If the cone $C(X,L)$ has a rationalizing divisor, then
	\[
	H^{n}(X, L^{m}) = 0\text{ for }m\geq 0
	\]
	\end{corB}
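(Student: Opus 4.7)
The plan is to reduce the problem to Theorem A by producing a $\mathbb{G}_m$-invariant rationalizing divisor on $Y := C(X,L)$, which then must be of cone form $C(D, L|_D)$ for some reduced divisor $D$ on $X$. Once in that form, Theorem A gives $H^i(X, L^m(-D)) = 0$ for all $i > 0$ and $m \geq 0$. Specializing to $i = n$ and using the long exact sequence associated to
\[
0 \to L^m(-D) \to L^m \to L^m|_D \to 0
\]
together with the vanishing $H^n(D, L^m|_D) = 0$ (dimension reason, since $\dim D = n - 1$), we would conclude $H^n(X, L^m) = 0$.

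To manufacture such an invariant divisor from an arbitrary rationalizing divisor $\tilde D$, I would use the $\mathbb{G}_m$-action $\lambda_t$ on $Y$ to form the family $\{\lambda_t^{-1}\tilde D\}_{t \in \mathbb{G}_m}$ of rationalizing divisors (each $\lambda_t$ being an automorphism of $Y$) and pass to the flat limit as $t \to 0$. Concretely, if a component of $\tilde D$ is locally cut out by $f = f_k + f_{k+1} + \cdots$ where $f_k$ is its lowest nonvanishing graded piece, then the corresponding limit is $V(f_k)$, and the entire limit $\tilde D_0$ is $\mathbb{G}_m$-invariant and hence of the form $C(D, L|_D)$.

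The hard part is to verify that $\tilde D_0$ is itself a rationalizing divisor, since the rational pair property is not obviously preserved under specialization of the boundary. To deal with this I would work with an equivariant log resolution $g : W \to Y$, track the defining identity $Rg_*\mathcal{O}_W(-F_t) = \mathcal{O}_Y(-\lambda_t^{-1}\tilde D)$ across the family, and invoke semicontinuity of coherent cohomology in the flat family of ideal sheaves to propagate the isomorphism to $t = 0$; attention must also be paid to preserving the reduced structure of the limit. A secondary technical point is that applying Theorem A requires $(X, D)$ itself to be a rational pair, which should follow by restricting the rational pair structure to the exceptional divisor $X \subset \mathrm{Tot}(L^\vee) = \mathrm{Bl}_v Y$: the strict transform of $C(D, L|_D)$ meets the zero section exactly along $D$, giving an induced log resolution of $(X, D)$ with the required pushforward identity.
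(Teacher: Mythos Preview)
Your endgame---apply Theorem~A to get $H^i(X, L^m(-D)) = 0$, then run the long exact sequence in degree $n$ and use $\dim D = n-1$ to kill $H^n(D, L^m|_D)$---is exactly what the paper does. The divergence is entirely in the reduction to a cone-shaped divisor. The paper does not degenerate under the $\mathbb{G}_m$-action at all; it simply invokes the surjection $\operatorname{Cl} X \twoheadrightarrow \operatorname{Cl} C(X,L)$ (Hartshorne, Ex.~II.6.3), so that every Weil divisor on the cone is linearly equivalent to some $C(D)$, and then proceeds under the assumption that the rationalizing divisor already has the form $C(D)$. To verify that $(X,D)$ is itself a rational pair (your ``secondary technical point''), the paper restricts to the punctured cone $U$, notes that $C(D)|_U = \pi^{-1}D$ for the bundle projection $\pi : U \to X$, and applies the bundle lemma (Lemma~\ref{affinebd}) directly---no log-resolution bookkeeping on the exceptional divisor is needed.

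Your worry that specialization need not preserve the rational-pair property is legitimate, and it is worth observing that the \emph{same} concern applies to the paper's linear-equivalence step: being a rational pair depends on the support of the divisor, not only on its class, and the paper does not comment on this. So your $\mathbb{G}_m$-degeneration outline is an honest attempt to handle a subtlety the paper's proof elides. That said, as written your proposal has a genuine gap: you have not supplied the semicontinuity statement for rational pairs in a flat family of boundaries, nor the argument that the flat limit $\tilde D_0$ remains reduced, and both are essential for your plan to go through.
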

	
	Then we give a necessary condition for the existence of a rationalizing divisor:
	\begin{theoC}[\ref{codimrat}]
	Let $(X,D)$ be a rational pair. Then the non rational locus of $X$ has codimension at least $3$.
	\end{theoC}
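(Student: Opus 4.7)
The plan is to reduce the codimension-$3$ claim to a local cohomological computation at the generic point of a putative codimension-$2$ component of the non-rational locus, where the problem becomes a $2$-dimensional surface-singularity question.

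First I would observe that the non-rational locus of $X$ is contained in $\mathrm{Supp}(D)$: if $x \notin \mathrm{Supp}(D)$, then on an open neighborhood $U$ of $x$ the divisor $D$ is empty, so $(U,\emptyset)$ is a rational pair, which is simply the statement that $U$ has rational singularities. Since $X$ is normal, it is regular---and in particular rational---at every codimension-$1$ point, so the non-rational locus already has codimension at least $2$.

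For the main step, suppose for contradiction that an irreducible component $V$ of the non-rational locus has codimension exactly $2$, and let $\eta$ be its generic point, so $\eta \in \mathrm{Supp}(D)$. Localizing at $\eta$---the rational pair condition is local---we reduce to the $2$-dimensional situation on $\sp\mathcal{O}_{X,\eta}$, with closed point $\eta$ lying in a reduced $1$-dimensional divisor $D$. Let $f : Y \to X$ be a thrifty log resolution and $\tilde D = f^{-1}_* D$ the strict transform, so that by hypothesis $R^i f_* \mathcal{O}_Y(-\tilde D) = 0$ for $i>0$. From the short exact sequence
\[
0 \to \mathcal{O}_Y(-\tilde D) \to \mathcal{O}_Y \to \mathcal{O}_{\tilde D} \to 0
\]
one extracts the piece
\[
R^1 f_* \mathcal{O}_Y(-\tilde D) \to R^1 f_* \mathcal{O}_Y \to R^1 f_* \mathcal{O}_{\tilde D}.
\]
The left term vanishes by the rational pair hypothesis; the right term vanishes because, in the localized $2$-dimensional setting, $f|_{\tilde D} : \tilde D \to D$ is a proper morphism between $1$-dimensional schemes and hence has $0$-dimensional fibers. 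Therefore $R^1 f_* \mathcal{O}_Y = 0$ at $\eta$, forcing $X$ to have rational singularities at $\eta$ and contradicting $\eta \in V$.

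The main anticipated obstacle is to justify rigorously that the rational pair condition descends under localization at a codimension-$2$ generic point, and that the strict transform retains the expected $1$-dimensional structure there, so that $f|_{\tilde D}$ really is a proper morphism between curves. A subsidiary concern is the precise convention for $\tilde D$ in the definition of rational pair: the argument above uses the strict transform only; if the paper's convention instead includes the reduced exceptional divisor in $\tilde D$, an extra short exact sequence is needed to peel off the exceptional contribution before invoking the curve-to-curve vanishing.
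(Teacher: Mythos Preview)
Your argument is correct and takes a genuinely different, more elementary route. The paper reduces to dimension $2$ by cutting with general hyperplanes: it invokes a Bertini theorem for rational pairs to make a general complete-intersection surface $(S,D|_S)$ rational, applies a separately proved surface lemma (whose proof winds through $\mathcal{O}_Y(-E)$, $\mathcal{O}_Y(-E-B)$ and several auxiliary vanishings) to deduce that $S$ has rational singularities, and then uses Elkik's deformation theorem to propagate rationality from $S$ back up to $X$. You instead localize at the generic point $\eta$ of a putative codimension-$2$ component and run a single short exact sequence; this bypasses Bertini, the auxiliary surface computation, and Elkik entirely. The paper's route buys a connection to the broader toolkit (Bertini-type and deformation-type statements for rational pairs), while yours is self-contained.

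One small imprecision to tighten: ``proper morphism between $1$-dimensional schemes, hence $0$-dimensional fibres'' is not valid as stated. What you need is that no component of $\tilde D$ is contracted over $\eta$. This holds because $\tilde D$ is the strict transform, so each component $\tilde D_i$ maps birationally onto a component $D_i$ of $D$; then the preimage of $\overline{\{\eta\}}\subset D_i$ is a proper closed subset of the irreducible variety $\tilde D_i$, hence has dimension at most $\dim D_i-1$, and its generic fibre over the codimension-$1$ point $\eta\in D_i$ is therefore $0$-dimensional. With that in hand $f|_{\tilde D}$ is quasi-finite and proper near $\eta$, hence finite, and $(R^1f_*\mathcal{O}_{\tilde D})_\eta=0$ as you claim. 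Your anticipated obstacles are harmless: the vanishing $R^1f_*\mathcal{O}_Y(-\tilde D)=0$ is a stalk condition and localizes directly, the higher $R^{\ge 2}$ vanish at $\eta$ by the fibre-dimension bound, and the paper's convention for $B$ is exactly the strict transform, so no exceptional peeling is required.
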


	We conclude the paper with an example that shows that the above theorems provide necessary but not sufficient conditions to guarantee the existence of a rationalizing divisor.

\section{Background}

	In this section we present the basic definitions and results on rational pairs. We will work on an algebraically closed field $k$ of characteristic zero. The main references are \cite{kollar2013sing}, \cite{definv}.
	\begin{defi}
	A \emph{reduced pair} $(X,D)$ consists of the datum of a normal variety $X$ and a reduced Weil divisor $D$ on $X$.
	\end{defi}
	
	The pair analogue for smoothness is an snc pair.
	\begin{defi}
	A \emph{simple normal crossing} (or snc) pair $(X,D)$ is a pair such that $X$ is smooth, every component $D_{i}$ of $D$ is smooth and the $D_{i}$'s intersect transversely.
	For any reduced pair $(X,D$), its \emph{snc locus} of $(X,D)$ is the largest open set $U$ such that $(U, D\left|_U\right.)$ is snc.
	\end{defi}

	As the reader might expect, we require maps between varieties to respect the pair structure. More precisely, in this paper we will use the following terminology:
	\begin{defi}
	Let $(X,D), (Y,B)$ be reduced pairs. A \emph{birational morphism of pairs} $f$ is a birational morphism $f:Y\to X$ such that $B = f_*^{-1} D$, where $f_*^{-1} D$ denotes the strict transform of $D$.
	\end{defi}
	
	\begin{defi}
	A \emph{resolution} of a pair $(X,D)$ is a birational morphism of pairs $(Y,B)\to (X,D)$, with $(Y,B)$ snc.
	\end{defi}
	
	So far the generalization of smoothness and resolutions was relatively straightforward. In order to define singularities of pairs that behave similarly to rational singularities, we have to restrict to a particular class of resolutions.
	
	\begin{defi}
	If $(X,D)$ is snc, any intersection of the components $D_{i}$ of $D$ is called a \emph{stratum} of $(X,D)$.
	\end{defi}
	
	\begin{defi}
	A resolution $f: (Y,B)\to(X,D)$ is called \emph{thrifty} if it satisfies the following conditions:
	\begin{enumerate}
		\item $f$ is an isomorphism over the generic point of any stratum of $snc(X,D)$
		\item $f$ is an isomorphism over the generic point of any stratum of $(Y,B)$
	\end{enumerate}
	\end{defi}
	The conditions in the above definition are equivalent, respectively, to these two:
	\begin{enumerate}
		\item $f(\text{Ex}f)$ does not contain any stratum of $snc(X,D)$.
		\item $\text{Ex}f$ does not contain any stratum of $(Y,B)$
	\end{enumerate}
	
	Moreover, thrifty resolutions always exist for any pair in characteristic zero \cite{kollar2013sing}.
	
	We can now define a class of resolutions whose properties closely resemble the behavior of the resolution of a variety with rational singularities.
	
	\begin{defi}
	A resolution of pairs $f:(Y,B)\to (X,D)$ is called \emph{rational} if
		\begin{enumerate}
			\item The natural injection $\mathcal{O}_{X}(-D)\hookrightarrow f_{*}\mathcal{O}_{Y}(-B)$ is an isomorphism
			\item $R^{i} f_{*}\mathcal{O}_{Y}(-B) = 0$ for $i > 0$
			\item $R^{i}f_{*}\omega_{Y}(B) = 0$ for $i >0$
		\end{enumerate}
	\end{defi}
	
	It is worth pointing out that over a field of characteristic zero condition (3) is automatically satisfied by an analogue of the Grauert-Riemenschneider vanishing theorem \cite[Prop. 3.6]{definv}.
	Moreover, we will show in the next section that condition (1) always holds if $X$ is normal. This implies that most of the times, in order to prove rationality of a resolution, we will concentrate our efforts in showing that (2) holds.
	
	Finally, we can state our main definition:
	\begin{defi}
	A reduced pair is a \emph{rational pair} if it has a rational thrifty resolution.
	\end{defi}
	
	One can prove that if a reduced pair has a rational thrifty resolution, then every thrifty resolution is rational.
	It is not known whether rational resolutions are necessarily thrifty; it is true for log resolutions, as shown in \cite[Prop. 3.4]{definv}.

\section{An observation on the normality condition}
	
	In the usual definition of rational singularities, one requires that for a resolution of singularities $f:Y\to X$ the natural map $\mathcal{O}_{X}\to f_{*}\mathcal{O}_{Y}$ is an isomorphism: this is equivalent to $X$ being normal.

	The first condition for $(X,D)$ to be a rational pair is that $\mathcal{O}_X(-D) \to f_* \mathcal{O}_Y(-B)$ is an isomorphism for a resolution of pairs $f:(Y,B)\to (X,D)$. We'll show that it's always true if $X$ is normal. More precisely:
	
	\begin{lm}
	Let $X$ be normal and let $D$ be a reduced divisor on $X$. Let $f: (Y,B)\to (X,D)$ be a binational morphism of pairs, with $Y$ normal. Then the natural map $\mathcal{O}_X(-D) \to f_* \mathcal{O}_Y(-B)$ is an isomorphism.
	\end{lm}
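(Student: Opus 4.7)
The plan is to realize both sheaves as sub-ideal-sheaves of $\mathcal{O}_X$ and then check equality by a valuation argument at the codimension-one points of $X$, which normality makes available.

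To set up the inclusion, I would first observe that, since $X$ and $Y$ are normal and $f$ is proper birational (implicit in the setting), $f_{*} \mathcal{O}_Y = \mathcal{O}_X$: the pushforward is a coherent integral $\mathcal{O}_X$-algebra sitting inside $K(X)$, and normality of $X$ forces it to coincide with $\mathcal{O}_X$. Consequently $f_{*}\mathcal{O}_Y(-B) \subseteq \mathcal{O}_X$ is an ideal sheaf, and the natural map of the statement is simply the inclusion $\mathcal{O}_X(-D) \subseteq f_{*}\mathcal{O}_Y(-B)$ of sub-ideal-sheaves of $\mathcal{O}_X$.

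For the reverse inclusion I would pick a local section $s$ of $f_{*}\mathcal{O}_Y(-B)$ on some open $U \subseteq X$ and reduce, using that $\mathcal{O}_X(-D)$ is the divisorial sheaf on the normal variety $X$, to checking $\mathrm{ord}_{D_i}(s) \geq 1$ at the generic point $\eta_i$ of each component $D_i$ of $D|_U$. The crucial geometric point is that $f$ is a local isomorphism near each such $\eta_i$: since $f$ is birational there is a dense open $U_0 \subseteq X$ on which it is an isomorphism, and the strict transform $B_i := f_{*}^{-1} D_i$ is by definition the closure of $f^{-1}(D_i \cap U_0)$, so the fact that $B_i$ is a nonempty component of $B$ forces $D_i \cap U_0$ to be dense in $D_i$ and hence $\eta_i \in U_0$. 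Under the local isomorphism at $\eta_i$, the condition $f^{*}s \in \mathcal{O}_Y(-B)$ translates, via normality of $Y$ applied at the generic point of $B_i$, into $\mathrm{ord}_{B_i}(f^{*}s) \geq 1$, which is identified with $\mathrm{ord}_{D_i}(s) \geq 1$, as required.

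I do not expect a serious obstacle: the proof is essentially valuative bookkeeping facilitated by the normality of both source and target. The one mild subtlety is that a priori a component $D_i$ could be contained entirely in the non-isomorphism locus of $f$, in which case its strict transform would be empty and my matching of components of $B$ with components of $D$ would fail; this is excluded by the standard fact that, for $f$ proper birational with normal target, the image of the exceptional set has codimension at least two in $X$.
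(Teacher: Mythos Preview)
Your proof is correct, but it follows a different path from the paper's. The paper argues via the commutative diagram
\[
\begin{CD}
0 @>>> \mathcal{O}_X(-D) @>>> \mathcal{O}_X @>>> \mathcal{O}_D @>>> 0\\
@. @VaVV @V\simeq VV @VbVV \\
0 @>>> f_*\mathcal{O}_Y(-B) @>>> f_*\mathcal{O}_Y @>>> f_*\mathcal{O}_B
\end{CD}
\]
showing that the right vertical map $b:\mathcal{O}_D\to f_*\mathcal{O}_B$ is injective (a nontrivial kernel would cut out a proper closed subscheme of $D$ through which the dominant map $B\to D$ would have to factor), and then a stalkwise diagram chase gives surjectivity of $a$. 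Your argument instead exploits directly that $\mathcal{O}_X(-D)$ is a reflexive (divisorial) sheaf on a normal variety, hence determined by its sections in codimension one, and checks the required order-of-vanishing condition at the generic point $\eta_i$ of each $D_i$ by observing that $f$ is an isomorphism there. Both arguments rest on the same two ingredients---$f_*\mathcal{O}_Y=\mathcal{O}_X$ from normality of $X$, and the fact that no $D_i$ lies in the non-isomorphism locus of $f$---but package them differently: the paper's version is a clean categorical diagram chase, while yours is a valuative computation that makes the role of the $S_2$ property of $\mathcal{O}_X(-D)$ explicit and would adapt readily to other reflexive-sheaf statements.
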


	\begin{proof}
	The natural injection $a:\mathcal{O}_X(-D) \to f_* \mathcal{O}_Y(-B)$, induced by the inclusion $\mathcal{O}_X\hookrightarrow f_*\mathcal{O}_Y$, fits into the following commutative diagram with exact rows
	\[
	\begin{CD}
	0 @>>> \mathcal{O}_X(-D) @>>> \mathcal{O}_X @>>> \mathcal{O}_D @>>> 0\\
	@.		@VaVV @V\simeq VV @. \\
	0 @>>> f_*\mathcal{O}_Y(-B) @>>> f_* \mathcal{O}_Y @>>> f_* \mathcal{O}_B 
	\end{CD}
	\]
	and a bit of diagram chasing shows that there's a morphism $b: \mathcal{O}_D \to f_*\mathcal{O}_B $ that fits into the above commutative diagram:
	\[
	\begin{CD}
	0 @>>> \mathcal{O}_X(-D) @>>> \mathcal{O}_X @>>> \mathcal{O}_D @>>> 0\\
	@.		@VaVV @V\simeq VV @VbVV \\
	0 @>>> f_*\mathcal{O}_Y(-B) @>>> f_* \mathcal{O}_Y @>>> f_* \mathcal{O}_B 
	\end{CD}
	\]
	We know that the middle arrow is an isomorphism by normality of $X$. We want to show that $a$ is surjective.
	
	First of all, we claim that the map $b: \mathcal{O}_D\to f_*\mathcal{O}_B$ is injective. Suppose it had a non trivial kernel $K$: this is a sheaf of ideals on $D$. Since $D$ is reduced, then this sheaf of ideals defines a proper closed subscheme $W$ through which $f$ factors: this is impossible since $f$ is dominant.
	
	Now we can take the stalk at a point $x\in X$ and prove surjectivity of $a$ via a diagram chase. Let $\alpha\in f_*\mathcal{O}_Y(-B)_{x}$, and let $\beta$ its image in $f_* \mathcal{O}_{Y,{x}}$. Then this last element corresponds to $\gamma$ via the isomorphism $\mathcal{O}_{X}\simeq f_{*}\mathcal{O}_{Y}$: let $\delta$ be its image in $\mathcal{O}_{D}$. Then $b(\delta)=0$ in $f_{*}\mathcal{O}_{B}$: since $b$ is injective, $\delta = 0$. This implies that $\gamma$ is the image of a (unique) element $\eta\in \mathcal{O}_{X}(-D)$, that maps to $\alpha$. This concludes the proof.
	\end{proof}

\section{A rationality criterion for cones over pairs}

	\begin{defi}[\cite{kollar2013sing}]
	Let $X$ a projective variety and let $L$ be an ample line bundle on $X$. The \emph{affine cone} over $X$ with conormal bundle $L$ is
	\[
	C_a(X, L) = \text{Spec }\bigoplus_{m\geq 0} H^{0}(X, L^{m})
	\]
	\end{defi}
	Note that this is the normalization of the projective cone over $X$ relative to the embedding defined by some power of $L$.

	Let $(X,D)$ be a rational pair, and let $L$ be an ample line bundle on $X$. Let $C = C_a(X, L)$ be the affine cone over $X$, and let $\Delta = C_a(D, L\left|_D\right.)$. In this section, we will find a criterion to find out whether $(C,\Delta)$ is a rational pair, thus generalizing \cite[Prop. 3.13]{kollar2013sing}.

	A suggestion for a necessary condition comes from the following observation:  consider the exact sequence
	\[
	0\to \mathcal{O}_{X}(-D)\to \mathcal{O}_{X}\to \mathcal{O}_{D}\to 0
	\]
	After twisting by $L^{m}$ and taking cohomology we have the long exact sequence
	
	\[
	\begin{CD}
	@.  \ldots @>>> H^{i-1}(D, L^{m}) \to\\
	\to H^{i}(X, L^{m}(-D)) @>>> H^{i}(X, L^{m}) @>>> H^{i}(D, L^{m}) \to \ldots\\
	@.
	\end{CD}
	\]
	
	Assume now $D$ is Cartier and reduced. Then both $X$ and $D$ are Cohen-Macaulay, as $\mathcal{O}_{X}(-D)$ is Cohen-Macaulay (see \cite[Prop. 2.82]{kollar2013sing}).
	
	For the same reason, if $(C,\Delta)$ is a rational pair then $C$ and $\Delta$ are CM, hence the intermediate dimensional cohomology of $L^{m}$ on $X$ and $D$ vanishes (see \cite[Prop. 3.11]{kollar2013sing}). The above long exact sequence implies that
	\[
	H^{i}(X, L^{m}(-D)) = 0
	\]
	at least for $m\geq 0$, $1\leq i \leq \dim X -1$.
	
	As it turns out, this is a necessary and sufficient condition for the cone pair $(C,\Delta)$ to be rational.
	
	\begin{thm}\label{conecrit}
	Let $(X, D)$ be a rational pair, and let $L$ be an ample line bundle. Then the cone $(C(X,L), C(D, L\left|_D\right.))$ is a rational pair if and only if
	\[
	H^{i}(X, L^{m}(-D)) = 0\text{ for }i>0, m\geq 0
	\] 
	\end{thm}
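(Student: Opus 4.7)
The plan is to build an explicit thrifty resolution of $(C, \Delta)$ by pulling back a thrifty rational resolution $f:(Y, B) \to (X, D)$ along the total-space-of-line-bundle construction, and then to reduce the rationality vanishing $R^i g_*\mathcal{O}_{\tilde{Y}}(-\tilde{B}) = 0$ for $i>0$ to the cohomology of $L^m(-D)$ on $X$ via flat base change and the projection formula. The main obstacle is verifying that the pulled-back resolution is thrifty; once that is in place, the cohomological part reduces by formal manipulations to computing $H^i(\tilde{C}, \tilde{p}^*\mathcal{O}_X(-D))$ on a line-bundle total space.

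Concretely, let $\tilde{p}:\tilde{C}\to X$ be the affine morphism with $\tilde{p}_*\mathcal{O}_{\tilde{C}} = \bigoplus_{m\geq 0} L^m$, so that $\tilde{C}$ admits a contraction $\pi:\tilde{C}\to C$ of its zero section to the vertex $v$ and is an isomorphism elsewhere. Let $q:\tilde{Y}\to Y$ be the analogous object with $f^*L$ in place of $L$, let $\tilde{f}:\tilde{Y}\to\tilde{C}$ be the resulting cartesian pullback of $f$, and set $g=\pi\circ\tilde{f}$ and $\tilde{B}=q^{-1}(B)$. Then $(\tilde{Y}, \tilde{B})$ is snc because $(Y, B)$ is, and $\tilde{B}$ is the strict transform of $\Delta$. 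Every stratum of $(\tilde{Y}, \tilde{B})$ has the form $q^{-1}(S)$ for a stratum $S$ of $(Y, B)$, while $\text{Ex}\,g$ is the union of $q^{-1}(\text{Ex}\,f)$ and the zero section of $\tilde{Y}\to Y$; since $q^{-1}(S)$ is never contained in the zero section, $q^{-1}(S)\subseteq\text{Ex}\,g$ would force $S\subseteq\text{Ex}\,f$, contradicting thriftiness of $f$. Hence $g$ is thrifty. Conditions (1) and (3) in the definition of a rational resolution are automatic, by the lemma of Section 2 applied to the normal variety $C$ and by \cite[Prop. 3.6]{definv} respectively, so only the vanishing of $R^i g_*$ is in question.

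For the cohomological computation, $\mathcal{O}_{\tilde{Y}}(-\tilde{B}) = q^*\mathcal{O}_Y(-B)$ since $B$ is Cartier in $Y$, and the square with sides $\tilde{f}, q, \tilde{p}, f$ is cartesian with flat vertical maps, so flat base change together with rationality of $f$ yields $\tilde{f}_*\mathcal{O}_{\tilde{Y}}(-\tilde{B}) = \tilde{p}^*\mathcal{O}_X(-D)$ and $R^i\tilde{f}_*\mathcal{O}_{\tilde{Y}}(-\tilde{B}) = 0$ for $i>0$. The Leray spectral sequence for $g=\pi\circ\tilde{f}$ therefore collapses to $R^i g_*\mathcal{O}_{\tilde{Y}}(-\tilde{B}) = R^i\pi_*\tilde{p}^*\mathcal{O}_X(-D)$ for $i>0$. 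Because $\pi$ is an isomorphism away from $v$, each such sheaf is a skyscraper at $v$ whose stalk equals the global cohomology $H^i(\tilde{C}, \tilde{p}^*\mathcal{O}_X(-D))$. Affineness of $\tilde{p}$ together with the projection formula give $\tilde{p}_*\tilde{p}^*\mathcal{O}_X(-D) = \bigoplus_{m\geq 0} L^m(-D)$, and therefore
\[
H^i\bigl(\tilde{C},\, \tilde{p}^*\mathcal{O}_X(-D)\bigr) \;=\; \bigoplus_{m\geq 0} H^i\bigl(X,\, L^m(-D)\bigr),
\]
which vanishes for all $i>0$ if and only if the stated cohomological condition holds. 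This yields the desired equivalence.
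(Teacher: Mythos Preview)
Your argument is correct and follows essentially the same route as the paper: factor through the blowup $\tilde{C}\to C$ of the vertex, produce a thrifty resolution of $(C,\Delta)$ through $\tilde{C}$, and reduce the rationality question to computing $R^i\pi_*$ of (a twist of) the structure sheaf on the line-bundle total space, which unwinds to $\bigoplus_{m\ge 0}H^i(X,L^m(-D))$. The only real difference is that the paper invokes Lemma~\ref{affinebd} to say $(\tilde{C},\tilde{p}^{-1}D)$ is a rational pair and then takes an \emph{arbitrary} thrifty rational resolution of it, whereas you construct the resolution explicitly as the fibred product with a fixed thrifty rational resolution of $(X,D)$ and use flat base change directly; this makes the cohomological step slightly cleaner.

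One small gap: in your thriftiness check you only verify condition~(2), that no stratum of $(\tilde{Y},\tilde{B})$ lies in $\mathrm{Ex}\,g$. You should also note condition~(1), that $g(\mathrm{Ex}\,g)$ contains no stratum of $\mathrm{snc}(C,\Delta)$. This follows by the same mechanism: away from the vertex the strata of $\mathrm{snc}(C,\Delta)$ are pullbacks of strata of $\mathrm{snc}(X,D)$ along the $\mathbb{G}_m$-bundle $C\setminus\{v\}\to X$, and $g(\mathrm{Ex}\,g)\setminus\{v\}$ is the pullback of $f(\mathrm{Ex}\,f)$, so containment would contradict thriftiness of $f$; the vertex itself is not in $\mathrm{snc}(C,\Delta)$.
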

	
	Before the proof, we will need a couple of lemmas.
	The first one is about rational pairs and bundles with a smooth fiber, even though we just state it for projective and affine line bundles. The second one is a simple observation on the composition of derived functors.
	
	\begin{lm}\label{affinebd}
	Let $\pi: Y\to X$ be an $F$-bundle, where $F$ is either $\mathbb{A}^{1}$ or $\P^{1}$ and $X$ is normal. Let $D$ be a reduced divisor on $X$ and let $B=\pi^{-1}D$ be its scheme-theoretic preimage on $Y$. Then $(X,D)$ is a rational pair if and only if $(Y, B)$ is.
	\end{lm}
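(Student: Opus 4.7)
The plan is to pull back a thrifty resolution of $(X,D)$ along $\pi$ and show that rationality transfers in both directions via flat base change. Given a resolution $f:(X',D')\to (X,D)$, form the fiber square with $Y' = Y\times_X X'$, projections $\pi':Y'\to X'$ and $g:Y'\to Y$, and set $B' = \pi'^{-1}D'$. Since $\pi$ is smooth with smooth fiber, so is $\pi'$; hence $Y'$ is smooth, the components of $B'$ are smooth divisors, and they cross normally exactly when the components of $D'$ do. So $g:(Y',B')\to (Y,B)$ is again a resolution of pairs, and $Y$ itself is normal because smooth morphisms preserve normality.

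Next I would check the thrifty conditions. Because $\pi$ is smooth and $B = \pi^{-1}D$ scheme-theoretically, the snc locus of $(Y,B)$ equals $\pi^{-1}(\mathrm{snc}(X,D))$, and every stratum of $(Y,B)$ (resp.\ $(Y',B')$) is the preimage under $\pi$ (resp.\ $\pi'$) of a stratum of $\mathrm{snc}(X,D)$ (resp.\ $(X',D')$). Since $\pi$ and $\pi'$ are faithfully flat with positive-dimensional fibers, $f$ is an isomorphism over the generic point of a stratum of $\mathrm{snc}(X,D)$ if and only if $g$ is an isomorphism over the generic point of its preimage; the analogous statement holds for strata of $(Y',B')$, and likewise $g(\mathrm{Ex}\,g) = \pi^{-1}f(\mathrm{Ex}\,f)$. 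So thrifty resolutions of $(X,D)$ correspond bijectively under base change to thrifty resolutions of $(Y,B)$ of this particular form.

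For rationality itself, condition (1) in the definition of a rational resolution is automatic on both sides by the lemma of the previous section (since $X$ and $Y$ are normal), and condition (3) is automatic in characteristic zero by Grauert--Riemenschneider. The content is condition (2). Since $\pi$ is flat and $\mathcal{O}_{Y'}(-B') = \pi'^*\mathcal{O}_{X'}(-D')$, flat base change yields
\[
R^i g_*\mathcal{O}_{Y'}(-B') \cong \pi^* R^i f_*\mathcal{O}_{X'}(-D').
\]
If $(X,D)$ is rational, take $f$ a rational thrifty resolution: the right-hand side vanishes for $i>0$, so $g$ is a rational thrifty resolution of $(Y,B)$. Conversely, if $(Y,B)$ is rational, take any thrifty resolution $f$ of $(X,D)$ (which exists in characteristic zero); then $g$ is thrifty, hence rational (every thrifty resolution of a rational pair is rational), so $\pi^* R^i f_*\mathcal{O}_{X'}(-D') = 0$, and faithful flatness of $\pi$ forces $R^i f_*\mathcal{O}_{X'}(-D') = 0$.

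The delicate step is the thrifty bookkeeping: one must verify that the snc locus, strata, and exceptional set all pull back cleanly under $\pi$. This ultimately rests on $\pi$ being smooth with smooth (hence geometrically integral) fiber, so that preimages of integral subschemes remain integral and the scheme-theoretic preimage of a Cartier divisor coincides with the usual pullback.
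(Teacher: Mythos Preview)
Your argument is correct and follows essentially the same route as the paper: base-change a thrifty resolution of $(X,D)$ along $\pi$, use flat base change to identify $R^i g_*\mathcal{O}_{Y'}(-B')$ with $\pi^* R^i f_*\mathcal{O}_{X'}(-D')$, and invoke faithful flatness of $\pi$ together with the fact that every thrifty resolution of a rational pair is rational for the converse. The only cosmetic difference is that the paper first localizes on $X$ to reduce to the trivial bundle $Y=F\times X$, which lets it declare the thriftiness transfer ``clear'' rather than spelling out the stratum-by-stratum bookkeeping you carry out globally.
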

	
 	\begin{proof}
	Since the question is local on $X$, we can assume $Y = F\times X$ is the trivial bundle. We will start the proof by relating the properties of a resolution of $X$ to the property of a resolution of $Y$
	
	Let $\epsilon: \widetilde{X}\to X$ be a thrifty resolution of $X$: let $\widetilde{Y} = F\times \widetilde{X}$, let $\eta = id \times \epsilon$ and consider the commutative diagram
	\[
	\begin{CD}
		\widetilde{Y} @>\widetilde{\pi}>> \widetilde{X}\\
		 @V\eta VV                                   @V\epsilon VV\\
		Y                    @>\pi>> X 
	\end{CD}
	\]
	Note that the thriftiness of $\epsilon$ clearly implies thriftiness of $\eta$. Moreover, both $\pi$ and $\widetilde{\pi}$ are faithfully flat maps, as they are obtained by base change of the faithfully flat maps $Y\to\text{Spec}k, \widetilde{Y}\to\text{Spec}k,$.
	
	  By \cite[Prop. III.9.3]{hart77}, for any coherent sheaf $\F$ on $\widetilde{X}$  and for any $i\geq 0$ we have a natural isomorphism
	\[
	\pi^{*} R^{i}\epsilon_{*} \F \stackrel{\sim}{\to} R^{i}\eta_{*} \left( \widetilde{\pi}^{*} \F \right)
	\]
	In particular, if $\F = \mathcal{O}_{\widetilde{X}}(-\widetilde{D})$ then one can check that $ \widetilde{\pi}^{*} \F  \simeq  \mathcal{O}_{\widetilde{Y}}(-\widetilde{B})$, so
	\[
	\pi^{*} R^{i}\epsilon_{*} \mathcal{O}_{\widetilde{X}}(-\widetilde{D}) = 0\Leftrightarrow  R^{i}\eta_{*} \mathcal{O}_{\widetilde{Y}}(-\widetilde{B}) = 0
	\]
	Moreover, since $\pi$ is faithfully flat,
	\[
	\pi^{*} R^{i}\epsilon_{*} \mathcal{O}_{\widetilde{X}}(-\widetilde{D}) = 0\Leftrightarrow R^{i}\epsilon_{*} \mathcal{O}_{\widetilde{X}}(-\widetilde{D})  = 0
	\]
	Hence $\epsilon$ is a rational (thrifty) resolution iff $\eta$ is.
	
	Now assume $(X,D)$ is a rational pair: then $\epsilon$ is rational, so $\eta$ is rational and thrifty hence $(Y, B)$ is a rational pair.
	Conversely, if $(Y, B)$ is a rational pair, then the thrifty resolution $\eta$ has to be rational, so $\epsilon$ is a rational thrifty resolution of $(X,D)$.
	
	\end{proof}
	
	\begin{lm}\label{speclemma}
	Let $f:X\to Y$ and $g:Y\to Z$ be morphisms of varieties, and let $\F$ be a coherent sheaf on $X$, and let $\G = f_{*}\G$. If $R^{i}f_{*}\F = 0$ for $i>0$, then
	\[
	R^{i}(g\circ f)_{*}\F = R^{i}g_{*}\G
	\]
	\end{lm}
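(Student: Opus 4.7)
The plan is to invoke the Grothendieck (Leray) spectral sequence for the composition $g\circ f$, which reads
\[
E_{2}^{p,q} = R^{p}g_{*}R^{q}f_{*}\F \Longrightarrow R^{p+q}(g\circ f)_{*}\F.
\]
This is the standard tool for relating higher direct images of a composite to the iterated higher direct images, and it applies here since $f$ and $g$ are morphisms of varieties and $\F$ is coherent (so one has enough injectives / flasque resolutions to set up the spectral sequence).

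Under the hypothesis $R^{q}f_{*}\F = 0$ for all $q>0$, the $E_{2}$ page of this spectral sequence is concentrated in the single row $q=0$: all rows with $q>0$ vanish identically. A spectral sequence whose $E_{2}$ page is supported on one row degenerates at $E_{2}$, because every differential $d_{r}$ for $r\geq 2$ has source and target in different rows, so at least one of them must be zero. Therefore
\[
R^{i}(g\circ f)_{*}\F \;=\; E_{2}^{i,0} \;=\; R^{i}g_{*}\bigl(f_{*}\F\bigr) \;=\; R^{i}g_{*}\G,
\]
which is exactly the claimed identification.

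There is no serious obstacle: the entire content is the degeneration of the Leray spectral sequence in the presence of a vanishing hypothesis, and the only small thing to be careful about is the statement of the spectral sequence in the coherent setting, for which one can cite \cite[Ex. III.8.1]{hart77} or a standard reference. (The typo $\G = f_{*}\G$ in the statement is of course to be read as $\G = f_{*}\F$.)
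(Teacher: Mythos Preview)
Your proof is correct and essentially identical to the paper's: the paper phrases the same argument in derived-category language, writing $\mathbf{R}(g\circ f)_{*}\F = \mathbf{R}g_{*}\circ\mathbf{R}f_{*}\F = \mathbf{R}g_{*}(f_{*}\F) = \mathbf{R}g_{*}\G$ and then taking $i$th cohomology, which is exactly the degeneration of the Grothendieck spectral sequence you spell out. Your observation about the typo $\G = f_{*}\G$ is also on point.
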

	\begin{proof}
	The Grothendieck-Serre spectral sequence gives the following identities in the derived category of sheaves over $Z$:
	\[
	\mathbf{R}(g\circ f)_{*}\F = \mathbf{R}g_{*} \circ \mathbf{R}f_{*}\F = \mathbf{R}g_{*} (f_{*}\F) = \mathbf{R}g_{*} \G
	\]
	and taking homology in the $i$th degree yields the desired result.
	\end{proof}

	\begin{proof}[Proof of Theorem \ref{conecrit}]
	The basic idea is the following: blow up the vertex of the normalized cone to get a map
	\[
	p: (Y,B)\to (C,\Delta)
	\]
	where $B$ is the birational transform of $\Delta$. Since locally the blow up is a $\P^1$-bundle over $X$, then $(Y,B)$ is still a rational pair (cfr. Lemma \ref{affinebd}).
	
	Let $g: (\widetilde{Y}, \widetilde{B})\to (Y,B)$ be a rational thrifty resolution of $(Y,B)$, and let $h = f\circ g$.
	
	The strategy for the proof is the following:
	\begin{enumerate}
		\item Show the map $h$ is a thrifty resolution.
		\item Prove that the pair $(C,\Delta)$ is rational if and only if $R^{j}p_{*}\mathcal{O}_{Y}(-B) = 0$ for $j > 0$.
		\item Show that $R^{j}p_{*}\mathcal{O}_{Y}(-B) = \displaystyle{\bigoplus_{m\geq 0} H^{j}(X, L^{m}(-D))}$ for $j>0$
	\end{enumerate}
	We will give two proofs of the third claim: the first one is the most intuitive because it uses the theorem on formal functions, but it requires $D$ to be Cartier. The second one works for Weil divisors, too.
	
	\begin{enumerate}
		\item (\textsc{thriftiness of $h$}) By construction, $h(\text{Ex}(h))$ contains the vertex $P$ of $C$, that is not in the snc locus of $(C,\Delta)$. This means that condition $1$ is satisfied. As for condition $2$, let $\widetilde{E}$ be the strict transform of $E$ in $\widetilde{Y}$. If the intersection of some components $\widetilde{B_j}$ of $\widetilde{B}$ is contained in $\text{Ex}(h)$, then it has to be contained in $\widetilde{E}$ by the thriftiness of $g$. In other words,
	\[
	\widetilde{E} \supseteq \bigcap \widetilde{B}_i
	\]
	and $\bigcap \widetilde{B}_i$ is disjoint from $\text{Ex}(g)$. This implies that
	\[
	E \supseteq \bigcap {B}_i
	\]
	Nevertheless, if two (or more) of the $B_i$'s meet in $E$, since $Y$ is locally a product, then the corresponding components of $\Delta$ meet in $C$. This implies that the intersection of the $B_i$'s cannot be mapped to a point by $p$ (indeed, its image is the cone over the corresponding components of $\Delta$). Therefore $h$ is thrifty.
	
	\item \textsc{Claim:} $(C,\Delta)$ is rational if and only if $R^{j}p_{*}\mathcal{O}_{Y}(-B) = 0$ for $j\geq 0$
	
	Assume first $(C,\Delta)$ is a rational pair: then every thrifty resolution is rational \cite[Cor. 2.86]{kollar2013sing}, so $Rh_* \mathcal{O}_{\widetilde{Y}}(-\widetilde{B}) \sim \mathcal{O}_{C}(-\Delta)$. This implies, by Lemma \ref{speclemma}, that \[Rp_* \mathcal{O}_{{Y}}(-{B}) \sim \mathcal{O}_{C}(-\Delta)\]
	
	Conversely, assume the above condition holds. Then since $h$ is thrifty and $\textrm{char }k=0$, then $R^ih_*\omega_{\widetilde{Y}}(\widetilde{B}) = 0$ for $i>0$, by rational pair analogue of Grauert-Riemenschneider vanishing theorem (see \cite[Prop. 3.6]{definv}). This proves that $h$ is a rational thrifty resolution, hence $(C,\Delta)$ is a rational pair.
	
	\item \textsc{Claim: }$R^{j}p_{*}\mathcal{O}_{Y}(-B) = \bigoplus_{m\geq 0} H^{j}(X, L^{m}(-D))$

	\begin{proof}
	First of all, note that $R^i p_* \mathcal{O}_Y(-B) $ is a skyscraper sheaf supported on the cone point $P\in C(X)$, and its stalk is $H^i(Y, \mathcal{O}_Y(-B))$.
	
	Let $\pi: Y\to X$ the projection: note that it's an affine morphism, therefore
	\[
		H^i(Y, \mathcal{O}_Y(-B)) \simeq H^i(X, \pi_*\mathcal{O}_Y(-B))
	\]
	We will now express $\pi_*\mathcal{O}_Y(-B)$ in terms of $L$.
	
	Consider the exact sequence
	\[
		0\to\pi_*\mathcal{O}_Y(-B) \to \pi_*\mathcal{O}_Y\to\pi_*\mathcal{O}_B\to 0
	\]
	(observe that exactness on the right follows form the fact that $\pi$ is an affine morphism)
	By construction, $\pi_*\mathcal{O}_Y \simeq \bigoplus_{m\geq 0} L^m$, and since $B$ is an affine bundle over $D$, it's easy to check that $\pi_*\mathcal{O}_B\simeq \bigoplus_{m\geq 0} L^m\left|_D\right.$. This implies that $\pi_*\mathcal{O}_Y(-B)\simeq \bigoplus_{m\geq 0} L^m(-D)$: taking cohomology of both sides yields   $H^i(X, \pi_*\mathcal{O}_Y(-B))\simeq \bigoplus_{m\geq 0} H^i(X, L^m(-D))$.
	\end{proof}

	\end{enumerate}

	Combining everything together, the pair $(C,\Delta)$ is rational if and only if $H^i(X, L^m(-D))$ vanishes for every $i>0, m\geq 0$.
	\end{proof}
	
	Theorem \ref{conecrit} has an interesting consequence for the existence of rationalizing divisors on cones.
	
	\begin{cor}\label{conenec}
	Let $X$ be a normal variety of dimension $n$ and let $L$ be an ample line bundle on $X$. If the cone $C(X,L)$ has a rationalizing divisor, then
	\[
	H^{n}(X, L^{m}) = 0\text{ for }m\geq 0
	\]
	\end{cor}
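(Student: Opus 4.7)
The plan is to reduce to the cone-form case and then apply Theorem \ref{conecrit} combined with a short exact sequence. Let $\Delta$ be a rationalizing divisor on $C := C(X, L)$. The main obstacle is that Theorem \ref{conecrit} describes only rationalizing divisors of the form $C_a(D, L|_D)$, whereas the hypothesis gives an arbitrary rationalizing divisor on the cone. So the first step is to replace $\Delta$ by a cone-form divisor.

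For this reduction I would exploit the $\mathbb{G}_m$-action on $C$: each scalar $t \in \mathbb{G}_m$ acts by an automorphism of $C$, and consequently $(C, t \cdot \Delta)$ is a rational pair for every $t$. Moreover, a $\mathbb{G}_m$-invariant reduced divisor on $C$ is automatically of the form $C_a(D, L|_D)$ for some reduced divisor $D \subset X$, since $\mathbb{G}_m$-invariant prime divisors on $C$ are exactly the cones over prime divisors on $X$. My plan is to produce an invariant rationalizing divisor by taking the flat limit of $t \cdot \Delta$ as $t \to 0$. Justifying that the rational pair property survives this specialization is the delicate step and I expect it to be the main technical point, as it amounts to a semi-continuity statement for rationality of pairs.

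Once a rationalizing divisor of cone form $\Delta = C_a(D, L|_D)$ is secured, Theorem \ref{conecrit} immediately yields $H^{i}(X, L^m(-D)) = 0$ for every $i > 0$ and $m \geq 0$. Taking $i = n$ gives $H^n(X, L^m(-D)) = 0$ for all $m \geq 0$. To extract the desired vanishing, I twist the structure sequence $0 \to \mathcal{O}_X(-D) \to \mathcal{O}_X \to \mathcal{O}_D \to 0$ by $L^m$ and read off the exact cohomology fragment
\[
H^{n}(X, L^m(-D)) \to H^{n}(X, L^m) \to H^{n}(D, L^m|_D).
\]
The leftmost term vanishes by Theorem \ref{conecrit}, while the rightmost term vanishes for dimension reasons since $\dim D = n-1 < n$. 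Exactness therefore forces $H^{n}(X, L^m) = 0$ for all $m \geq 0$, completing the proof.
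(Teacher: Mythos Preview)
Your final step --- the exact sequence squeezing $H^n(X,L^m)$ between two vanishing terms --- is exactly what the paper does. The differences are upstream.

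\textbf{Reduction to cone form.} The paper bypasses your $\mathbb{G}_m$-degeneration entirely. It observes that there is a natural surjection $\operatorname{Cl} X \twoheadrightarrow \operatorname{Cl} C(X,L)$ sending a divisor to the cone over it (cf.\ \cite[Ex.~II.6.3]{hart77}), so any rationalizing divisor on $C$ is already of the form $C(D)$ for some reduced $D\subset X$, up to linear equivalence. This is a one-line reduction, whereas your flat-limit approach requires a semi-continuity statement for rationality of pairs that you correctly flag as delicate --- and which is not available off the shelf. As written, that step is a genuine gap in your argument; the paper's route avoids it.

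\textbf{Missing hypothesis check.} Theorem~\ref{conecrit} is stated with $(X,D)$ a rational pair as a \emph{hypothesis}, so you cannot invoke it immediately after producing a cone-form divisor $\Delta=C(D)$. The paper fills this in: the punctured cone $U=C\setminus\{\text{vertex}\}$ is an $\mathbb{A}^1$-bundle over $X$, and since $(U,\Delta|_U)$ is rational, Lemma~\ref{affinebd} forces $(X,D)$ to be rational as well. Only then does Theorem~\ref{conecrit} apply. Your write-up omits this, so the appeal to Theorem~\ref{conecrit} is formally incomplete even after the reduction.
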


	\begin{proof}
	First of all, observe that there's a natural surjection $\text{Cl }X \to \text{Cl }C(X, L)$, taking a divisor to the cone over it (see \cite[Ex. II.6.3]{hart77}). This means that every divisor on $C(X)$ is the cone over a divisor on $X$, up to linear equivalence.
	
	Now suppose $(C(X), C(D))$ is a rational pair, and let $U$ be $C(X)$ minus the cone point. Since $(U,C(D)\left|_U\right.)$ is a rational pair and $U\to X$ is a $\A^1$-bundle over $X$, it follows from Lemma \ref{affinebd} that $(X,D)$ is a rational pair, too.

	Consider now the exact sequence
	\[
	0\to \mathcal{O}_{X}(L^{m}(-D)) \to \mathcal{O}_{X}(L^{m}) \to \mathcal{O}_{D}(L^{m}) \to 0
	\]
	Then taking cohomology in degree $n$ yields
	\[
	H^{n}(X, L^{m}(-D)) \to H^{n}(X, L^{m}) \to H^{n}(D, \mathcal{O}_{D}(L^{m} ))\to 0
	\]
	The first term vanishes by Proposition \ref{conecrit}, and the rightmost group is zero for dimension reasons. This gives $H^{n}(X, L^{m}) = 0$ for all $m\geq 0$.
	\end{proof}

	\begin{rmk}
		This necessary condition for cones to have a rationalizing divisor implies Erickson's condition on the vanishing of the top-dimensional cohomology \cite[Prop. 3.5.2]{linthsay}.
	\end{rmk}

	\section{Rationalizing divisors and non-rational locus}
	In this section we will give a necessary condition for a variety to have a rationalizing divisor. Many of the statements hold in the context of Du Bois pairs (see \cite[Theorem 1.4.1]{graf2014potentially}).
	
	First of all, we will investigate the case of surfaces. The simplest case is the cone over a curve: let $C$ be a smooth projective curve with an effective divisor $F$ and let $L$ be an ample line bundle on it. If $X$ is the cone over $C$ and $D$ the cone over $F$, then we know by Theorem \ref{conecrit} that if the pair $(X,D)$ is rational then $H^{1}(C, L^{m}(-F)) = 0$ for all $m\geq 0$. It's easy to see that $H^{1}(C, L^{m}(-F))$ surjects onto $H^{1}(C, L^{m})$: using again Theorem \ref{conecrit}, it follows that $X$ has rational singularities.

	This holds in general. Indeed, we have the following	
	\begin{lm}
	 If $(X,D)$ is a rational pair and $X$ is a normal surface, then $X$ has rational singularities.	
	\end{lm}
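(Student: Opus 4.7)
The plan is to compare a rational thrifty resolution of the pair $(X,D)$ against the short exact sequence
\[
0 \to \mathcal{O}_{\widetilde{X}}(-\widetilde{D}) \to \mathcal{O}_{\widetilde{X}} \to \mathcal{O}_{\widetilde{D}} \to 0
\]
on the resolution. Let $g : (\widetilde{X},\widetilde{D}) \to (X,D)$ be a rational thrifty resolution; in particular $\widetilde{X}$ is smooth and $g: \widetilde{X}\to X$ is a birational morphism, so it is a resolution of singularities of the surface $X$ in the classical sense. Since $X$ is normal we have $g_*\mathcal{O}_{\widetilde{X}} = \mathcal{O}_X$, so to prove that $X$ has rational singularities it suffices to show $R^1 g_*\mathcal{O}_{\widetilde{X}} = 0$ (higher $R^i g_*$ vanish automatically on a surface since the fibers of $g$ have dimension at most $1$).

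Applying $Rg_*$ to the displayed short exact sequence gives the long exact sequence
\[
\cdots \to R^1 g_*\mathcal{O}_{\widetilde{X}}(-\widetilde{D}) \to R^1 g_*\mathcal{O}_{\widetilde{X}} \to R^1 g_*\mathcal{O}_{\widetilde{D}} \to \cdots
\]
The leftmost term vanishes because $g$ is a rational resolution of the pair. Therefore $R^1 g_*\mathcal{O}_{\widetilde{X}}$ injects into $R^1 g_*\mathcal{O}_{\widetilde{D}}$, and the whole proof reduces to showing
\[
R^1 g_*\mathcal{O}_{\widetilde{D}} = 0.
\]

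For this last vanishing I would use that $\widetilde{D} = g_*^{-1}D$ is the strict transform, so $g$ restricts to a birational morphism $g|_{\widetilde{D}} : \widetilde{D} \to D$ between reduced curves. Since both source and target are one-dimensional and the morphism is birational, all of its fibers are zero-dimensional, which forces $R^i (g|_{\widetilde{D}})_*\mathcal{O}_{\widetilde{D}} = 0$ for $i > 0$ by the dimension bound on cohomological pushforward. Factoring $g$ restricted to $\widetilde{D}$ as $\widetilde{D} \xrightarrow{g|_{\widetilde{D}}} D \hookrightarrow X$ and noting that the closed immersion has vanishing higher direct images (or equivalently applying Lemma~\ref{speclemma}) gives $R^1 g_*\mathcal{O}_{\widetilde{D}} = 0$ as required.

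I do not expect a serious obstacle: the only delicate point is making sure the restriction of a thrifty resolution to the strict transform really is birational onto $D$ component by component, but this is immediate from the definition $\widetilde{D} = g_*^{-1}D$. Everything else is a formal consequence of the long exact sequence, the surface dimension bound, and the hypothesis that $g$ is a rational resolution of the pair.
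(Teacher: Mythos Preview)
Your argument is correct, and it is considerably more direct than the paper's own proof. The paper works through the exceptional divisor $E$: it first shows $R^1\pi_*\mathcal{O}_E(-B|_E)=0$ and hence $R^1\pi_*\mathcal{O}_E=0$, deduces $R^1\pi_*\mathcal{O}_Y(-E)\simeq R^1\pi_*\mathcal{O}_Y$, then proves $R^1\pi_*\mathcal{O}_Y(-E-B)=0$ by a separate surjectivity argument, and finally uses the finiteness of $\pi|_B$ on the sequence $0\to\mathcal{O}_Y(-B-E)\to\mathcal{O}_Y(-E)\to\mathcal{O}_B(-E)\to 0$ to conclude. You bypass all of the $E$-steps by applying $Rg_*$ directly to $0\to\mathcal{O}_{\widetilde{X}}(-\widetilde{D})\to\mathcal{O}_{\widetilde{X}}\to\mathcal{O}_{\widetilde{D}}\to 0$: the left term has vanishing $R^1$ by the rational-pair hypothesis, and the right term has vanishing $R^1$ because no component of the strict transform $\widetilde{D}$ is contracted (so $g|_{\widetilde{D}}$ is proper with zero-dimensional fibers, hence finite). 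Both proofs ultimately hinge on the same fact---finiteness of the restriction to the strict transform---but you invoke it at the outset rather than after a detour through the exceptional locus. One small wording point: for possibly reducible $\widetilde{D}$ the phrase ``birational morphism between reduced curves'' should be read component-wise; what you actually use (and what the definition of strict transform gives immediately) is that each component of $\widetilde{D}$ dominates a component of $D$, which is exactly what forces zero-dimensional fibers.
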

	
	\begin{proof}
	Since singularities are isolated on normal surfaces, then we can work locally and assume that $X$ has one isolated singularity $x\in X$. Moreover, if $x\notin \text{Supp}D$ then $X$ is rational at $D$ and smooth away from it, hence the claim is trivially true. Therefore we will assume $x\in \text{Supp}D$.
	
	First of all we claim that if $\pi:(Y, B)\to (X, D)$ is a thrifty rational resolution of $X$ with exceptional locus $E$, then $X$ has rational singularities iff
	\[
	R^1\pi_* \mathcal{O}_Y(-E) = 0 
	\]
	
	We will prove this first claim by establishing some vanishing results. First of all let's consider the long exact sequence
	\[
	\begin{CD}
	0@>>> \pi_*\mathcal{O}_Y(-E-B) @>>> \pi_*\mathcal{O}_Y(-B) @>>>\pi_* \mathcal{O}_E(-B\left|_E\right.) @>>>\\
	@>>> R^1\pi_*\mathcal{O}_Y(-E-B) @>>> R^1\pi_* \mathcal{O}_Y(-B) @>>> R^1\pi_*\mathcal{O}_E(-B\left|_E\right.) @>>> 0
	\end{CD}
	\]
	By rationality of the pair $(X, D)$ we have that $R^1\pi_* \mathcal{O}_Y(-B) = 0$, which implies \[R^1\pi_*\mathcal{O}_E(-B\left|_E\right.) = 0\]
	 
	Consider now the exact sequence
	\[
	\begin{CD}
	0@>>> \pi_*\mathcal{O}_E(-B\left|_E\right.) @>>> \pi_*\mathcal{O}_E @>>>\pi_* \mathcal{O}_{B\left|_E\right.} @>>>\\
	@>>> R^1\pi_*\mathcal{O}_E(-B\left|_E\right.) @>>> R^1\pi_*\mathcal{O}_E @>>>R^1\pi_* \mathcal{O}_{B\left|_E\right.}  @>>> 0
	\end{CD}
	\]
	we already know that $R^1\pi_*\mathcal{O}_E(-B\left|_E\right.) = 0$. Moreover, note that since $E$ is exceptional, $\pi: B\left|_E\right. \to x$ has a zero-dimensional image, hence $R^1\pi_* \mathcal{O}_{B\left|_E\right.} = 0$. This implies that $R^1\pi_*\mathcal{O}_E = 0$.
	
	Finally, consider the exact sequence 
	\begin{equation}\label{exseq}
		0\to \mathcal{O}_Y(-E) \to \mathcal{O}_Y \to \mathcal{O}_E \to 0
	\end{equation}
	and apply the functor $\pi_*$ to get
	\[
	\begin{CD}
	0@>>> \pi_*\mathcal{O}_Y(-E) @>>> \mathcal{O}_X @>>>\pi_* \mathcal{O}_E @>>>\\
	@>>> R^1\pi_*\mathcal{O}_Y(-E) @>>> R^1\pi_* \mathcal{O}_Y @>>> R^1\pi_*\mathcal{O}_E @>>> 0
	\end{CD}
	\]
	The exceptional locus $E$ is connected by Zariski's main theorem, hence the sheaf $\pi_*\mathcal{O}_E$ is a skyscraper sheaf supported on $x\in X$ and therefore the map $\mathcal{O}_X \to \pi_* \mathcal{O}_E $ is a surjection. This yields the exact sequence
	\[
	\begin{CD}
	0@>>> R^1\pi_*\mathcal{O}_Y(-E) @>>> R^1\pi_* \mathcal{O}_Y @>>> R^1\pi_*\mathcal{O}_E @>>> 0
	\end{CD}
	\]
	By the vanishing results above, the last sheaf is zero, hence $R^1\pi_*\mathcal{O}_Y(-E) \simeq R^1\pi_* \mathcal{O}_Y$. It follows that $X$ has rational singularities iff $R^1\pi_* \mathcal{O}_Y=0$ iff $R^1\pi_*\mathcal{O}_Y(-E) =0$.
	
	We now claim that \[R^1\pi_* \mathcal{O}_Y(-E-B) = 0\]
	Indeed, twisting the sequence (\ref{exseq}) by $-B$ and taking $\pi_{*}$ we get an exact sequence
	\[
	\begin{CD}
	@. 					 @. \pi_{*}\mathcal{O}_{Y}(-B) @>\alpha>>\pi_{*}\mathcal{O}_{E}(-B\left|_E\right.) @>>>\\
	@>>> R^{1}\pi_{*}\mathcal{O}_{Y}(-E-B)  @>>> 0 @.
	\end{CD}
	\]
	It suffices to show that the map $\alpha$ is surjective, and we can do it by checking surjectivity at the stalk at $x$. Clearly $\mathcal{O}_{E}(-B) \subseteq \mathcal{O}_{E}$, thus $(\pi_{*}\mathcal{O}_{E}(-B))_{x} \subseteq (\pi_{*}\mathcal{O}_{E})_{x}$. We already observed that $\pi_{*}\mathcal{O}_{E} \simeq k_{x}$, so $(\pi_{*}\mathcal{O}_{E}(-B))_{x}$ is either $0$ or $k$. Since we have
	\[
	(\pi_{*}\mathcal{O}_{E}(-B))_{x} = \Gamma(E, \mathcal{O}_{E}(-B\left|_E\right.))
	\]
	 and given that $x\in D$ ensures that $ \mathcal{O}_{E}(-B\left|_E\right.)$ is a proper ideal of $\mathcal{O}_{E}$, we can conclude that $\mathcal{O}_{E}(-B\left|_E\right.) = 0$.
	 
	 Finally, we will be done if we can show that $R^1\pi_* \mathcal{O}_Y(-E-B) = 0$ implies $R^1\pi_* \mathcal{O}_Y(-E) = 0$. This follows easily by considering
	\[
	0\to \mathcal{O}_Y(-B-E) \to \mathcal{O}_Y(-E) \to \mathcal{O}_B(-E) \to 0
	\]
	which yields an exact sequence
	\[
	R^{1}\pi_{*}\mathcal{O}_Y(-B-E) \to R^{1}\pi_{*}\mathcal{O}_Y(-E) \to R^{1}\pi_{*}\mathcal{O}_B(-E)
	\]
	Since $\pi\left|_B\right.$ is finite, then $R^{1}\pi_{*}\mathcal{O}_B(-E) = 0$, and this concludes the proof.

	\end{proof}

	Now we are ready to state the main result of the section:
	\begin{thm}\label{codimrat}
	Let $(X,D)$ be a rational pair. Then the non-rational locus of $X$ has codimension at least $3$.
	\end{thm}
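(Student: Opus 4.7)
The plan is to argue by induction on $n = \dim X$, with the surface lemma just proved as the base case. For $n = 1$ (where $X$ is smooth) and $n = 2$ the non-rational locus is empty, so the statement holds trivially. For $n \ge 3$, assume for contradiction that the non-rational locus $\Sigma$ has codimension at most $2$ (codimension exactly $2$, since $X$ is normal). Pick a generic closed point $p$ of a codimension-$2$ component of $\Sigma$, and cut $X$ by $n-2$ sufficiently general very ample divisors through $p$ to arrive at a normal surface $S \ni p$. Two ingredients will show that $(S, D|_S)$ is a rational pair with a non-rational singularity, contradicting the surface lemma.

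\textbf{Bertini for rational pairs.} If $(X,D)$ is a rational pair and $H \subset X$ is a general very ample divisor, then $(H, D|_H)$ is again a rational pair. Starting from a thrifty rational resolution $f \colon (Y,B) \to (X,D)$, Bertini ensures that for general $H$ both $H$ and its preimage $\widetilde{H} := f^{-1}(H)$ are normal/smooth and meet the relevant strata transversely, so that $f|_{\widetilde{H}} \colon (\widetilde{H}, B|_{\widetilde{H}}) \to (H, D|_H)$ is a thrifty resolution. Condition (1) of rationality follows from the normality lemma of Section~2. For condition (2), I would tensor the sequence $0 \to \mathcal{O}_Y(-f^*H) \to \mathcal{O}_Y \to \mathcal{O}_{\widetilde{H}} \to 0$ by the line bundle $\mathcal{O}_Y(-B)$, push forward, and apply the projection formula together with $R^i f_* \mathcal{O}_Y(-B) = 0$ for $i > 0$. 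Iterating, the surface $S$ inherits the structure of a rational pair.

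\textbf{Slicing preserves non-rationality.} If $x \in X$ is non-rational and lies on a general $H$, then $x$ is non-rational in $H$. Pick a resolution $g \colon Z \to X$; by the projection formula the map $R^i g_* \mathcal{O}_Z(-g^*H) \to R^i g_* \mathcal{O}_Z$ is multiplication by the defining section $s$ of $H$. Thus from $0 \to \mathcal{O}_Z(-g^*H) \to \mathcal{O}_Z \to \mathcal{O}_{g^*H} \to 0$ the image of $R^i g_* \mathcal{O}_Z \to R^i g_* \mathcal{O}_{g^*H}$ is $R^i g_* \mathcal{O}_Z / s R^i g_* \mathcal{O}_Z$. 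Since $s$ vanishes at $x$ and $(R^i g_* \mathcal{O}_Z)_x \ne 0$ by hypothesis, Nakayama forces this quotient stalk to be nonzero. Choosing $g$ to be a log resolution of $(X,H)$ and comparing $g^*H$ with its strict transform (which is a smooth resolution of $H$) then transfers this non-vanishing to an honest resolution of $H$, showing $H$ is non-rational at $x$. Iterating $n-2$ times, $S$ is non-rational at $p$, producing the contradiction.

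The main obstacle is the slicing step: upgrading the non-vanishing of $R^i g_* \mathcal{O}_Z / s R^i g_* \mathcal{O}_Z$ to non-vanishing on an actual resolution of $H$ requires carefully disentangling the total transform $g^*H$ from its strict transform, since these can differ by exceptional components over singular points of $X$. The Bertini step, by contrast, is a clean projection formula computation, and once the slicing lemma is in hand the induction is essentially routine.
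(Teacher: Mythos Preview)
Your overall architecture matches the paper's: cut down to a surface using Bertini for rational pairs, then invoke the surface lemma. The Bertini step is handled in the paper by citing \cite[Theorem 4.4]{definv}, and your projection-formula sketch is essentially the standard proof of that result, so no issue there.

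The real divergence is in the other direction. The paper does \emph{not} fix a non-rational point $p$ and slice through it. Instead it takes a \emph{completely general} complete intersection surface $S$ (no base point imposed), observes that $(S,D|_S)$ is a rational pair and hence $S$ has rational singularities, and then applies Elkik's theorem on deformations of rational singularities (packaged as Lemma~\ref{deflemma}): since each hyperplane is a Cartier divisor with rational singularities, $X$ has rational singularities in a neighborhood of $S$. Because $S$ moves freely, the non-rational locus must miss a general surface and hence has codimension at least $3$.

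Your ``slicing preserves non-rationality'' step is exactly the contrapositive of Elkik's theorem for a hyperplane section, and you are trying to prove it from scratch. The Nakayama argument is fine and correctly gives $(R^i g_* \mathcal{O}_{g^*H})_x \neq 0$. But once you force $H$ to pass through the singular point $p$, the linear system $|L\otimes\mathfrak m_p|$ acquires base locus, so on $Z$ the total transform $g^*H$ decomposes as $\widetilde H + F$ with $F$ an effective exceptional divisor supported over $p$. Transferring non-vanishing from $\mathcal{O}_{g^*H}$ to $\mathcal{O}_{\widetilde H}$ across this $F$ is not a formality: there is no projection formula available for $\mathcal{O}_Z(-\widetilde H)$, and the long exact sequences relating the two do not give the implication you need without further input. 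This is precisely the content that Elkik's theorem supplies, and you have not supplied a substitute. As written, the proposal has a genuine gap at the point you yourself flag as the ``main obstacle''; the fix is either to cite Elkik directly (as the paper does) or to rework the argument so that the hyperplanes are truly general and then run the deformation direction instead.
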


	Before proving the proposition, we have to establish a deformation result for rational singularities. 
	\begin{lm}\label{deflemma}
	Let $D$ be a Cartier divisor of $X$ that has rational singularities. Then on an open neighborhood of $D$ the ambient space $X$ has rational singularities.
	\end{lm}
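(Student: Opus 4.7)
The plan is to take a log resolution of the pair $(X,D)$ and show that the higher direct images of the structure sheaf vanish near $D$, propagating the vanishing from the strict transform of $D$ (which holds by rationality of $D$) up to $Y$ via a duality argument using the rational-pair Grauert--Riemenschneider vanishing.

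First I would show that $X$ is Cohen--Macaulay and normal in a neighborhood of $D$. Rationality of $D$ forces $D$ to be Cohen--Macaulay and normal, and since $D$ is cut out locally by a regular element in $X$, the depth of $\mathcal{O}_{X,x}$ is one more than that of $\mathcal{O}_{D,x}$ for $x\in D$, so $X$ is Cohen--Macaulay near $D$; Serre's $R_{1}+S_{2}$ criterion then yields normality. Next, I would choose a log resolution $\pi\colon Y\to X$ of $(X,D)$ such that the strict transform $\widetilde{D}$ is smooth and $\pi^{*}D=\widetilde{D}+E$ is SNC, with $E$ the exceptional divisor. The induced map $\pi|_{\widetilde{D}}\colon \widetilde{D}\to D$ is a resolution of $D$, so rationality of $D$ supplies $\pi_{*}\mathcal{O}_{\widetilde{D}}=\mathcal{O}_{D}$ and $R^{i}\pi_{*}\mathcal{O}_{\widetilde{D}}=0$ for $i>0$.

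Pushing forward the ideal sequence $0\to \mathcal{O}_{Y}(-\widetilde{D})\to \mathcal{O}_{Y}\to \mathcal{O}_{\widetilde{D}}\to 0$ and combining with $\pi_{*}\mathcal{O}_{Y}=\mathcal{O}_{X}$, one obtains $\pi_{*}\mathcal{O}_{Y}(-\widetilde{D})=\mathcal{O}_{X}(-D)$ together with natural isomorphisms $R^{i}\pi_{*}\mathcal{O}_{Y}(-\widetilde{D})\simeq R^{i}\pi_{*}\mathcal{O}_{Y}$ for every $i\geq 1$. It thus suffices to show $R^{i}\pi_{*}\mathcal{O}_{Y}(-\widetilde{D})=0$ for $i>0$ in a neighborhood of $D$. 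Since $X$ is Cohen--Macaulay near $D$, Grothendieck--Serre duality identifies $R\pi_{*}\mathcal{O}_{Y}(-\widetilde{D})$ with the derived dual of $R\pi_{*}\omega_{Y}(\widetilde{D})$, and the latter is concentrated in degree $0$ by the rational-pair Grauert--Riemenschneider vanishing $R^{i}\pi_{*}\omega_{Y}(\widetilde{D})=0$ for $i>0$, applied to the thrifty resolution $(Y,\widetilde{D})\to(X,D)$ of the reduced pair $(X,D)$.

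The main obstacle is then the identification $\pi_{*}\omega_{Y}(\widetilde{D})=\omega_{X}(D)$ in a neighborhood of $D$, which is what is needed to conclude that the derived dual is itself concentrated in degree $0$ (equivalently, that $\pi_{*}\omega_{Y}(\widetilde{D})$ is Cohen--Macaulay there). I would tackle this via the snake lemma applied to the adjunction diagram
\[
\begin{CD}
0 @>>> \pi_{*}\omega_{Y} @>>> \pi_{*}\omega_{Y}(\widetilde{D}) @>>> \pi_{*}\omega_{\widetilde{D}} @>>> 0 \\
@. @VVV @VVV @VVV \\
0 @>>> \omega_{X} @>>> \omega_{X}(D) @>>> \omega_{D} @>>> 0
\end{CD}
\]
whose top row is exact by Grauert--Riemenschneider on $Y$ and whose rightmost vertical is an isomorphism by rationality of $D$ (since then $\pi_{*}\omega_{\widetilde{D}}=\omega_{D}$). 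A reflexivity argument on the normal Cohen--Macaulay variety $X$, combined with the agreement of the middle sheaves on the open locus where $\pi$ is an isomorphism, should yield the required identification in a neighborhood of $D$ and close the argument.
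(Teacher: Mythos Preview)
Your approach differs sharply from the paper's. The paper gives a two-line argument: locally write $D=\{f=0\}$, note that $f\colon X\to\A^{1}$ is flat (dominant onto a regular one-dimensional base), and invoke Elkik's theorem that rational singularities deform in flat families. You instead attempt a direct resolution-and-duality argument, which would in effect reprove the relevant case of Elkik's theorem from scratch.

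Your outline is correct through the reduction to $R^{i}\pi_{*}\mathcal{O}_{Y}(-\widetilde{D})=0$, and the Grothendieck duality step legitimately converts this into the statement that $\pi_{*}\omega_{Y}(\widetilde{D})$ is maximal Cohen--Macaulay. The gap is in your closing ``reflexivity argument.'' The snake lemma on your adjunction diagram only gives an isomorphism of cokernels
\[
Q_{1}:=\omega_{X}/\pi_{*}\omega_{Y}\;\simeq\;\omega_{X}(D)/\pi_{*}\omega_{Y}(\widetilde{D})=:Q_{2},
\]
not their vanishing. Appealing to reflexivity does not help: pushforwards of line bundles under birational morphisms are \emph{not} reflexive in general (already $\pi_{*}\mathcal{O}_{Y}(-E)=\mathfrak{m}_{x}$ for the blowup of a smooth point on a surface fails $S_{2}$). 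Worse, the duality you used shows that $\pi_{*}\omega_{Y}(\widetilde{D})$ is MCM \emph{if and only if} $R^{i}\pi_{*}\mathcal{O}_{Y}(-\widetilde{D})=0$ for $i>0$, so as written the argument is circular.

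The circle can be broken, but it takes an additional idea you have not supplied. One route: twist the isomorphism $Q_{1}\simeq Q_{2}$ by $\mathcal{O}_{X}(-D)$ and use $\pi_{*}\omega_{Y}(\widetilde{D})\otimes\mathcal{O}_{X}(-D)=\pi_{*}\omega_{Y}(-E)\subseteq\pi_{*}\omega_{Y}$ to manufacture a surjection $Q_{1}\otimes\mathcal{O}_{X}(-D)\twoheadrightarrow Q_{1}$; a careful Nakayama-type analysis near $D$ then forces $Q_{1}=0$. This is essentially the content of Elkik's proof, and it is the step that needs to be written out rather than waved at.
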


	\begin{proof}
	Since the question is local, after possibly shrinking we can think of $D$ as being the zero set of a regular function $f$. Consider the map
	\[
	\varphi: X \to \A^1, x\mapsto f(x)
	\]
	We have that $\phi$ is flat, as it's dominant onto a one dimensional integral regular scheme and $X$ is reduced \cite[Prop. III.9.7]{hart77}.
	The special fiber $X_{0}\simeq D$ has rational singularities, hence by Elkik's theorem \cite[Th\'eor\`eme 2]{elkik1978singularites} $X$ has rational singularities in a neighborhood of $D$.
	\end{proof}
	
	Now we can proceed with the proof of the proposition.
	
	\begin{proof}
	We will prove the claim by showing that the non rational locus does not intersect a general complete intersection surface in $X$.
	
	Let $S$ be the intersection of $(\dim X - 2)$ general hyperplanes in $X$. Then $X$ is normal (see for example \cite{seidenberg1950hyperplane}), and by using the Bertini property for rational pairs \cite[Theorem 4.4]{definv}, the pair $(S, D\left|_S\right.)$ is rational. It follows from Lemma \ref{deflemma} that $S$ has rational singularities, so by repeatedly applying the deformation lemma \ref{deflemma} we conclude that $X$ has rational singularities near $S$.
	\end{proof}
	
	The proposition implies, for example, that a cone over a normal surface with a non rational isolated singularity cannot be a part of a rational pair, as the non rational locus is a line, which has codimension $2$.
	
	\section{An interesting example}
	
	In this section we will show that the necessary conditions in the previous sections (Propositions \ref{conenec}, \ref{codimrat})  are necessary for the existence of a rationalizing divisor, but not sufficient. We will construct a threefold with an isolated non-rational singularity at a point, that does not have a rationalizing divisor.
	
	We will achieve this by considering a cone $Y$ over a polarized surface $(X,L)$ with zero top-dimensional cohomology of $L^m$ for $m\geq 0$: to be consisted with the previous sections, $L$ will denote an ample line bundle.
	
	Let $C$ be a smooth curve of genus at least 2 and let $X$ be the ruled surface $C \times \P^{1}$. Let $\pi:X\to C, \xi:X\to\P^{1}$ be the projections. Then $\pi_*\mathcal{O}_X=\mathcal{O}_{C}\oplus\mathcal{O}_{C}$ (see for example \cite[Example V.2.11]{hart77}).
	Let $P\in\P^{1}$ and $Q\in C$ be arbitrary points, and let $C_{0} = \xi^{*}P$ be the canonical section, $f = \pi^{*}Q$ the fiber.
	
	The effective divisor $L:=\mathcal{O}_{X}(C_{0}+f)$ is ample by \cite[Prop. V.2.20]{hart77}.
	Note also that by \cite[Prop. V.2.8]{hart77} we have
	\[
	L = \mathcal{O}_{X}(C_{0})\oper \mathcal{O}_{X}(f) = \mathcal{O}_{X}(1)\oper \pi^{*}\mathcal{O}_{C}(Q)
	\]
	
	It follows that
	\[
	\pi_{*} L = \pi_{*}(\mathcal{O}_{X}(1)\oper \pi^{*}\mathcal{O}_{X}(Q)) 
	\]
	By \cite[Lemma V.2.4]{hart77} , since $D.f=1\geq 0$ then
	\[
	H^{1}(X,L) = H^{1}(C,\pi_{*}L) = H^{1}(C,\mathcal{O}_{X}(Q))\oplus H^{1}(C,\mathcal{O}_{X}(Q))\neq 0
	\]
	where the last claim follows from Riemann-Roch and the assumption on the genus of $C$.
	
	Since $H^{1}(X,L)\neq 0$, then we can deduce that the cone $Y = C(X,L)$ has an isolated, non-rational singularity by applying Proposition \ref{conecrit} with $D=0$. Note that by construction $Y$ is normal.
	
	As for the cohomology of $X$, notice that by Lemma 2.4
	\[
	H^2(X, \mathcal{O}_X) \simeq H^2(C, \pi_*\mathcal{O}_X) = 0
	\]
	where the last equality holds for dimension reasons.
	
	We will now prove that $Y$ does not have any rationalizing divisors. We have to prove that there is no effective (Weil) divisor $D = aC_0+bf$ on $X$ such that $(Y, C(D, L\left|_D\right.))$ is a rational pair, and by the criterion in section one this amounts to showing that there are no integers $a,b$ such that
	\[
	H^1(X, L^m( - D)) = 0 \text{ for all }m\geq 0
	\]
	First note that since $D$ is effective, by taking the intersection with $C_0$ and $f$ necessarily we have $a\geq 0, b\geq 0$. Moreover, by K\"unneth formula we have
	\begin{eqnarray}\label{kunny}
		H^1(X, L^m( - D)) & = &  H^1(X, \mathcal{O}_{X}((m-a)C_0 + (m-b)f))\\
					 & = & H^{1}(X, \xi^{*}\mathcal{O}_{\P^{1}}((m-a)P) \oper \pi^{*}\mathcal{O}_{X}((m-b)Q)) \nonumber \\
					 & = & H^0(\P^1, \mathcal{O}_{\P^{1}}((m-a)P)) \oper H^1 (C, \mathcal{O}_{C}((m-b)Q)) \oplus \nonumber \\
					 & & H^1(\P^1, \mathcal{O}_{\P^{1}}((m-a)P)) \oper H^0(C, \mathcal{O}_{C}((m-b)Q)) \nonumber
	\end{eqnarray}
	We claim that for any choice of $a, b$ there is an $m$ such that $H^1(X, L^m (- D))\neq 0$: this will prove that there is no rationalizing divisor for $Y$. The basic idea is to leverage the fact that $H^1(C, \mathcal{O}_{C}(Q))\neq 0$, which follows from Riemann-Roch.
	
	Assume first $a \neq b+2$. Then we set $m-b = 1$, that is $m = b+1$. We get that
	\begin{eqnarray*}
		H^0(\P^1, \mathcal{O}_{\P^{1}}((b-a+1)P)) \oper H^1 (C, \mathcal{O}_{C}(Q)) \neq 0 &\text{ if }& a\leq b+1\\
		H^1(\P^1, \mathcal{O}_{\P^{1}}((b+1-a)P)) \oper H^0(C, \mathcal{O}_{C}(Q))\simeq  & & \\
		H^0(\P^1, \mathcal{O}_{\P^{1}}((a-b-3)P)) \oper H^0(C, \mathcal{O}_{C}(Q)) \neq 0 &\text{ if }& a \geq b+3
	\end{eqnarray*}
	Since $a\neq b+2$, then at least one of the two tensor products is non zero.
	
	Assume now $a = b+2$. Then the second term in the direct sum of (\ref{kunny}) is then
	\begin{eqnarray*}
	H^1(\P^1, \mathcal{O}_{\P^{1}}((m-b-2)P)) \oper H^0(C, \mathcal{O}_{C}((m-b)Q)) &\simeq \\
	H^0(\P^1, \mathcal{O}_{\P^{1}}((b-m)P)) \oper H^0(C, \mathcal{O}_{C}((m-b)Q))	
	\end{eqnarray*}

	Setting $m = b$ yields $H^0(\P^1, \mathcal{O}_{\P^1}) \oper H^0(C, \mathcal{O}_C)\neq 0$.
	
	This shows that the threefold $Y$ has a non rational singularity at the cone point, yet does not have a rationalizing divisor.

	\subsection*{Acknowledgments}
	The author would like to thank his advisor S\'andor Kov\'acs for his supervision, direction, and many insights into the subject of this project. The author would also like to thank Siddharth Mathur for numerous useful conversations.

	\providecommand{\bysame}{\leavevmode\hbox to3em{\hrulefill}\thinspace}
\providecommand{\MR}{\relax\ifhmode\unskip\space\fi MR }
\providecommand{\MRhref}[2]{%
  \href{http://www.ams.org/mathscinet-getitem?mr=#1}{#2}
}
\providecommand{\href}[2]{#2}

\end{document}